\def\@setcopyright{\@empty}
\newcommand{\prn}[1]{\left(#1\right)}
\newcommand{\allp}{1\le p\le\infty}
\newcommand{\Lp}{L_{p,\alpha}}
\newcommand{\norm}[1]{\left\|#1\right\|_{p,\alpha}}
\newcommand{\normpar}[2]{\left\|#1\right\|_{#2}}
\newcommand{\E}{E_n(f)_{p,\alpha}}
\newcommand{\Epar}[2]{E_{#1}\left(#2\right)_{p,\alpha}}
\newcommand{\T}[3]{T_{#1}^{#2}\left(#3\right)}
\newcommand{\hatT}[3]{\hat T_{#1}^{#2}\left(#3\right)}
\newcommand{\Si}[1]{\left(1-#1^2\right)}
\newcommand{\w}{\hat\omega(f,\delta)_{p,\alpha}}
\newcommand{\wpar}[2]{\hat\omega_{#1}\left(#2\right)_{p,\alpha}}
\newcommand{\Px}[1]{P_{#1}^{(2,2)}}
\newcommand{\Py}[1]{P_{#1+2}^{(0,0)}}
\newtheorem{thm}{Theorem}[section]
\newtheorem{lmm}{Lemma}[section]
\newcounter{const}
\numberwithin{const}{thm}
\numberwithin{const}{lmm}
\numberwithin{const}{cor}
\newcommand{\Cn}[1][]{%
  \stepcounter{const}C_{\theconst}%
  \@ifnotempty{#1}{\newcounter{#1}\setcounter{#1}{\arabic{const}}}}
\newcommand{\lastC}{C_{\theconst}}
\newcommand{\prevC}[1][1]{%
	{\countdef\n=255
	 \n=\theconst
	 \advance\n by-#1
	 C_{\number\n}}}
\numberwithin{equation}{section}
\renewcommand{\theconst}{\arabic{const}}
\DeclareMathOperator*\esssup{ess\ sup}
\begin{document}

\title[A note on an inverse theorem\dots]
	{A note on an inverse theorem for a generalised
		modulus of smoothness%
	}
\author{Muharrem Q.~Berisha}
\author{Faton M.~Berisha}
\address{F.~M.\ Berisha\\
	Faculty of Mathematics and Sciences\\
	University of Prishtina\\
	N\"ena Terez\"e~5\\
	10000 Prishtin\"e\\
	Kosov\"e%
}
\email{faton.berisha@uni-pr.edu}

\keywords{Generalised modulus of smoothness,
	asymmetric operator of generalised translation,
	converse Jackson theorem,
	best approximations by algebraic polynomials%
}
\subjclass{Primary 41A35, Secondary 42A05.}
\date{}

\begin{abstract}
	We prove the theorem converse to Jackson's theorem
	for a modulus of smoothness of the first order
	generalised by means of an asymmetric operator
	of generalised translation.
\end{abstract}

\maketitle

\section*{Introduction}

The relation between the modulus of smoothness
and best approximation by trigonometric polynomials
of a $2\pi$-periodic function is well-known.
In the case of non periodic functions
there is no such relation between their moduli of smoothness
and best approximation by algebraic polynomials.
An analogy with the $2\pi$-periodic case takes place
if the ordinary modulus of smoothness
is replaced by a generalised modulus of smoothness
(see e.g.\
	\cite{butzer-s-w:approx-79,ditzian-t:moduli,
		potapov:vestnik-83%
	}).

In number of papers generalised moduli of smoothness
are introduced by means of generalised symmetric
operators of translation
\cite{potapov:trudy-75,potapov:trudy-81,potapov:vestnik-83}.

In~\cite{potapov:mat-99},
an asymmetric operator of generalised translation is introduced,
by means of it
a generalised modulus of smoothness of the first order is defined,
and the theorem of coincidence
of the class of functions defined by that modulus
with the class of functions
with given order of best approximation by algebraic polynomials
is proved.

In the present paper
we prove a theorem converse to Jackson's theorem
related to that modulus of smoothness.

\section{Definitions}

By~$L_p$ we denote the set of functions~$f$
measurable on the segment~$[-1,1]$
such that for $1\le p<\infty$
\begin{displaymath}
	\normpar f p
	=\prn{\int_{-1}^1|f(x)|^p\,dx}^{1/p}<\infty,
\end{displaymath}
and for $p=\infty$
\begin{displaymath}
	\normpar f\infty
	=\esssup_{-1\le x\le1}|f(x)|<\infty.
\end{displaymath}

Denote by~$\Lp$ the set of functions~$f$
such that
$f(x)\*(1-x^2)^\alpha\in L_p$,
and put
\begin{displaymath}
	\norm f=\normpar{f(x)(1-x^2)^\alpha}p.
\end{displaymath}

By~$\E$ we denote best approximation of a function $f\in\Lp$
by algebraic polynomials of degree not greater than~$n-1$,
in~$\Lp$ metrics,
i.e.
\begin{displaymath}
	\E=\inf_{P_n}\norm{f-P_n},
\end{displaymath}
where $P_n$ are algebraic polynomials
of degree not greater than~$n-1$.

For a function~$f$
we define an operator of generalised translation $\hatT t{}{f,x}$
by
\begin{multline*}
	\hatT t{}{f,x}
	=\frac1{\pi\Si x}\int_0^\pi
		\bigg(
			1-\prn{x\cos t-\sqrt{1-x^2}\sin t\cos\varphi}^2\\
			-2\sin^2t\sin^2\varphi+4\Si{x}\sin^2t\sin^4\varphi
		\bigg)\\
		\times f(x\cos t-\sqrt{1-x^2}\sin t\cos\varphi)\,d\varphi.
\end{multline*}

By means of that operator of generalised translation
we define the generalised modulus of smoothness by
\begin{displaymath}
	\w=\sup_{|t|\le\delta}\norm{\hatT t{}{f,x}-f(x)}.
\end{displaymath}

Put $y=\cos t$, $z=\cos\varphi$
in the operator $\hatT t{}{f,x}$,
we denote it by $\T y{}{f,x}$
and rewrite it in the form
\begin{multline*}
	\T y{}{f,x}
	  =\frac1{\pi\Si x}\int_{-1}^1
		  \big(
			1-R^2-2\Si y\Si z\\
	+4\Si x\Si y\Si{z}^2
		  \big)
		  f(R)\frac{dz}{\sqrt{1-z^2}},
\end{multline*}
where $R=xy-z\sqrt{1-x^2}\sqrt{1-y^2}$.

By $P_\nu^{(\alpha,\beta)}(x)$ $(\nu=0,1,\dotsc)$
we denote the Jacobi's polynomials,
i.e.\ algebraic polynomials of degree~$\nu$ orthogonal
with the weight function $(1-x)^{\alpha}(1+x)^{\beta}$
on the segment $[-1,1]$
and normed by the condition
$P_\nu^{(\alpha,\beta)}(1)=1$ $(\nu=0,1,\dotsc)$.

Denote by $a_n(f)$ the Fourier--Jacobi coefficients
of a function~$f$,
integrable with the weight function $\Si{x}^2$
on the segment $[-1,1]$,
with respect to the system of Jacobi polynomials
$\{\Px n(x)\}_{n=0}^\infty$,
i.e.
\begin{displaymath}
	a_n(f)=\int_{-1}^1f(x)\Px n(x)\Si{x}^2\,dx
	\quad(n=0,1,\dotsc).
\end{displaymath}

The following properties of the operator $T_y$
are proved in~\cite{potapov:mat-99}.

\begin{lmm}\label{lm:properties-T}
	Operator~$T_y$ has the following properties
	\begin{enumerate}
	\item
		The operator $\T y{}{f,x}$ is linear
		with respect to~$f$;
	\item
		$\T1{}{f,x}=f(x)$;
	\item
		$\T y{}{\Px n,x}=\Px n(x)R_n(y)$
		$(n=0,1,\dotsc)$,\\
		where $R_n(y)=\Py n(y)+\frac32\Si y\Px n(y)$;
	\item
		$\T y{}{1,x}=1$;
	\item
		$a_k\prn{\T y{}{f,x}}=R_k(y)a_k(f)$
		$(k=0,1,\dotsc)$.
	\end{enumerate}
\end{lmm}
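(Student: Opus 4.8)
First I would settle properties~(1), (2) and~(4), which are elementary. Property~(1) is immediate, since $\T y{}{f,x}$ is the integral of $f(R)$ against a kernel that does not involve~$f$. For~(2) set $y=1$: then $\sqrt{1-y^2}=0$, hence $R=x$, the factor in parentheses reduces to $\Si x$, and, since $\int_{-1}^1(1-z^2)^{-1/2}\,dz=\pi$,
\begin{displaymath}
	\T 1{}{f,x}=\frac1{\pi\Si x}\,\Si x\,f(x)\int_{-1}^1\frac{dz}{\sqrt{1-z^2}}=f(x).
\end{displaymath}
Property~(4) then follows from~(3) with $n=0$: since $\Px0\equiv1$,
\begin{displaymath}
	R_0(y)=P_2^{(0,0)}(y)+\tfrac32\Si y\,\Px0(y)=\tfrac12(3y^2-1)+\tfrac32(1-y^2)=1,
\end{displaymath}
so $\T y{}{1,x}=1$.

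The substance of the lemma is the product formula~(3), $\T y{}{\Px n,x}=\Px n(x)R_n(y)$. The plan is to pass to the substitution $x=\cos\theta$, $y=\cos t$, $z=\cos\varphi$, under which $\frac{dz}{\sqrt{1-z^2}}=d\varphi$ and $R$ becomes $\cos\theta\cos t-\sin\theta\sin t\cos\varphi$, that is, exactly the cosine that appears in the spherical addition theorem (after $\varphi\mapsto\pi-\varphi$, which fixes the measure, one may take the $+$ sign). Writing $\cos^2\varphi=1-\sin^2\varphi$ in $R^2$, the polynomial factor $1-R^2-2\Si y\Si z+4\Si x\Si y\Si{z}^2$ turns into a combination --- with coefficients depending only on $x$ and $y$ --- of the ultraspherical weights $1$, $\sin^2\varphi$, $\sin^4\varphi$ and of $\cos\varphi$, the last contributing, against $\Px n(R)$, a derivative-type addition integral. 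To each of the resulting integrals $\int_0^\pi\Px n(R)\,(\text{weight})\,d\varphi$ one applies the classical product (addition) formula for ultraspherical polynomials; alternatively, one may produce $\Px n$ inside the Legendre product formula $\int_0^\pi\Py n(R)\,d\varphi=\pi\,\Py n(x)\,\Py n(y)$ by applying a suitable second-order differential operator in $x$ and $y$, since $\Px n\propto(\Py n)''$ by the derivative formula for Jacobi polynomials. Each of these integrals then reduces to an expression in $\Px n(x),\Px n(y),\Py n(y)$, and on collecting the coefficients of the powers of $x$ and $y$ and simplifying one should be left with $\Px n(x)\bigl(\Py n(y)+\tfrac32\Si y\,\Px n(y)\bigr)=\Px n(x)R_n(y)$.

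Property~(5) is then a consequence of~(3): expand $f$ in its Fourier--Jacobi series with respect to $\{\Px k\}$, apply $T_y$ term by term --- legitimate by linearity and the continuity of $T_y$ in the relevant metric --- and read off the $k$-th coefficient using~(3) and the orthogonality of the system; alternatively, substitute the integral representation of $\T y{}{f,x}$ into the definition of $a_k\bigl(\T y{}{f,x}\bigr)$ and interchange the order of integration.

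The step I expect to be the real obstacle is~(3). Everything there hinges on the algebraic verification that the polynomial factor in the kernel is \emph{precisely} the correction converting the elementary, Legendre-type addition formula into the product formula for $\Px n$ with the stated multiplier: keeping track of the normalising constants, and recognising that the many lower-order terms produced along the way reassemble exactly into $\Py n(y)+\tfrac32\Si y\,\Px n(y)$, is the delicate bookkeeping. By comparison, the manipulations needed for~(1), (2), (4) and~(5) are routine.
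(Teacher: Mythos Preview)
The paper does not give its own proof of this lemma: immediately after stating it the authors write ``The following properties of the operator~$T_y$ are proved in~\cite{potapov:mat-99}'' and move on. So there is nothing in the present paper to compare your argument against line by line; the result is imported wholesale from the cited source.

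That said, your sketch is reasonable and the easy items are handled correctly. Properties~(1) and~(2) are verified exactly as you do, and your derivation of~(4) from~(3) via $R_0(y)=\tfrac12(3y^2-1)+\tfrac32(1-y^2)=1$ is clean. For~(5), the ``expand $f$ and apply $T_y$ term by term'' route is shakier than you suggest --- one has neither convergence of the Fourier--Jacobi series in $\Lp$ nor a continuity statement for $T_y$ at this point --- whereas your second alternative is the right instinct: insert the integral definition of $T_y$ into $a_k(T_y f)$, use Fubini, and recognise the inner integral as $T_y$ applied to $\Px k$ in the other variable, so that~(3) gives the factor $R_k(y)$ directly. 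This is the standard argument for generalised translations and is almost certainly what the cited paper does.

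For~(3) you correctly identify both the strategy (reduce to Gegenbauer/Legendre addition formulae after the trigonometric substitution) and the difficulty (the bookkeeping that shows the specific kernel produces exactly $\Py n(y)+\tfrac32\Si y\,\Px n(y)$). What you have written is a plan rather than a proof: the decomposition of the kernel into the weights $1,\sin^2\varphi,\sin^4\varphi,\cos\varphi$ is stated but the corresponding addition integrals are not evaluated, and the final collection of terms is asserted, not checked. Since this identity is precisely the nontrivial content of the lemma --- and the reason the authors defer to \cite{potapov:mat-99} --- a self-contained proof would need that computation carried out in full.
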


\begin{lmm}\label{lmm:th:coincidence}
	Let given numbers~$p$, $\alpha$, $r$ and~$\lambda$
	be such that $\allp$, $0<\lambda<2$;
	\begin{alignat*}2
		\frac12      &<\alpha\le1
		  &\quad &\text{for $p=1$},\\
		1-\frac1{2p} &<\alpha<\frac32-\frac1{2p}
		  &\quad &\text{for $1<p<\infty$},\\
		1            &\le\alpha<\frac32
		  &\quad &\text{for $p=\infty$}.
	\end{alignat*}
	Let $f\in\Lp$.
	Then
	\begin{displaymath}
		\E\le\frac{\Cn}{n^\lambda}
	\end{displaymath}
	if and only if
	\begin{displaymath}
		\w\le\Cn\delta^\lambda,
	\end{displaymath}
	where constants~$\prevC$ and~$\lastC$
	do not depend on~$f$, $n$ and~$\delta$.
\end{lmm}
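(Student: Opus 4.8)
\emph{Sketch of proof.}
The assertion is a two-sided Jackson--Bernstein equivalence, so I would prove the
two implications separately. Both rest on the Fourier--Jacobi expansion
$f\sim\sum_k a_k(f)\Px k$, on the multiplier identity
$a_k\prn{\T y{}{f,x}}=R_k(y)a_k(f)$ of Lemma~\ref{lm:properties-T}, part~(5), and
on a uniform boundedness of the generalised translation,
$\norm{\hatT t{}{f,x}}\le C\norm f$, that is \emph{not} part of that lemma and
must be established separately. Throughout $C$ denotes positive constants
independent of $f$, $n$ and $\delta$ (but possibly depending on $p$, $\alpha$,
$\lambda$), not necessarily the same at different occurrences; for each $n$ I fix
an algebraic polynomial $P_n$ of degree $<n$ with $\norm{f-P_n}\le2\E$; and I use
freely that the generalised modulus is nondecreasing and subadditive in its
function argument and obeys the growth estimate
$\wpar{}{f,\sigma\delta}\le C(1+\sigma)^2\wpar{}{f,\delta}$.

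\emph{The direct implication $\w\le C\delta^\lambda\Rightarrow\E\le Cn^{-\lambda}$.}
This is Jackson's direction; it reduces to the Jackson inequality
$\E\le C\,\wpar{}{f,1/n}$. The standard route is to build, with the kernel
$\krn t$ ($m$ of order $n$, $q$ fixed and large), a polynomial operator $V_nf$ of
degree $O(n)$ for which $f-V_nf=\int\bigl(f(x)-\hatT t{}{f,x}\bigr)\,d\nu_n(t)$,
where $\nu_n$ is a measure of bounded variation with $\int d\nu_n=1$ whose mass
is concentrated where $|t|$ is of order $1/n$; bounding the right-hand side by
$\int\wpar{}{f,|t|}\,d|\nu_n|(t)$ and applying the growth estimate gives
$\norm{f-V_nf}\le C\,\wpar{}{f,1/n}$, hence $\E\le Cn^{-\lambda}$.

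\emph{The converse implication $\E\le Cn^{-\lambda}\Rightarrow\w\le C\delta^\lambda$.}
This is the heart of the matter, and I would run the classical dyadic telescoping.
Put $Q_j:=P_{2^{j+1}}-P_{2^j}$, so that in $\Lp$ one has $f=P_1+\sum_{j\ge0}Q_j$,
$\deg Q_j<2^{j+1}$, and
$\norm{Q_j}\le2\Epar{2^{j+1}}{f}+2\Epar{2^j}{f}\le C\,2^{-j\lambda}$. Given
$0<\delta\le1$, pick $N$ with $2^{-N-1}<\delta\le2^{-N}$; since $P_1$ is a
constant and $\hat T_t1=1$ we have $\wpar{}{P_1,\delta}=0$, so subadditivity
gives
\begin{displaymath}
	\w\le\sum_{0\le j<N}\wpar{}{Q_j,\delta}+\sum_{j\ge N}\wpar{}{Q_j,\delta}.
\end{displaymath}
In the tail I use only $\norm{\hatT t{}{g,x}-g(x)}\le C\norm g$, getting
$\sum_{j\ge N}\wpar{}{Q_j,\delta}\le C\sum_{j\ge N}2^{-j\lambda}\le C\,2^{-N\lambda}\le C\,\delta^\lambda$.
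For the head I invoke the Bernstein-type inequality: for an algebraic polynomial
$Q$ of degree $\le n$,
\begin{displaymath}
	\sup_{|t|\le\delta}\norm{\hatT t{}{Q,x}-Q(x)}\le C\,(n\delta)^2\,\norm Q.
\end{displaymath}
As $j<N$ and $\delta\le2^{-N}$ force $2^{j+1}\delta\le1$, this gives
$\wpar{}{Q_j,\delta}\le C\,\bigl(2^{j+1}\delta\bigr)^2\norm{Q_j}\le C\,\delta^2\,2^{j(2-\lambda)}$,
and since $\lambda<2$ the sum over $j<N$ is $\le C\,\delta^2\,2^{N(2-\lambda)}\le C\,\delta^\lambda$;
adding head and tail yields $\w\le C\,\delta^\lambda$.

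\emph{The main obstacle.}
The telescoping is routine; the real work is the Bernstein-type inequality just
used and, underlying it, the uniform boundedness of $\hat T_t$ on $\Lp$ --- neither
of which is contained in Lemma~\ref{lm:properties-T}, and the defining kernel is
not visibly of one sign, so no positivity argument is at hand. Both are weighted
Fourier--Jacobi multiplier statements: writing
$R_k(y)=\Py k(y)+\frac32\Si y\Px k(y)$ as in Lemma~\ref{lm:properties-T},
part~(3), part~(5) of that lemma gives
\begin{displaymath}
	\hatT t{}{Q,x}-Q(x)=\sum_{k\le n}\bigl(R_k(\cos t)-1\bigr)a_k(Q)\,\Px k(x),
\end{displaymath}
so the inequality amounts to saying that the sequence
$\{R_k(\cos t)-1\}_{0\le k\le n}$ is a Fourier--Jacobi multiplier on $\Lp$ of norm
$\le C(n|t|)^2$ for $|t|\le\delta$. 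The pointwise side is tractable: $R_k(1)=1$,
and from the behaviour of Jacobi polynomials near $x=1$ one gets
$|R_k(\cos t)-1|\le C(kt)^2$ for $kt\le1$, with companion bounds for the
differences of $R_k(\cos t)$ in $k$. Converting this into a bound in the weighted
$L_p$ norm calls for a Marcinkiewicz-type multiplier theorem for expansions in
$\{\Px n\}$ against the weight $\Si x^\alpha$; the range in which such a theorem
holds --- equivalently, that of $L_p$-convergence of Fourier--Jacobi series --- is
exactly the union of intervals for $\alpha$ in the hypothesis, which is why
precisely those restrictions appear. I expect this multiplier/transference step,
and the parallel check that the kernel $\krn t$ maps $\Lp$ into polynomials with
the right error in the same weighted norm, to be where essentially all the
difficulty lies.
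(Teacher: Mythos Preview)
The paper does not prove this lemma at all: immediately after the statement it says ``The lemma is proved in~\cite{potapov:mat-99}.''  So there is nothing to compare your argument against line by line.  That said, the paper's proof of Theorem~\ref{th:converse} \emph{quotes} an intermediate inequality from the proof in~\cite{potapov:mat-99}, namely
\[
\wpar{}{f,\tfrac1n}\le C\Bigl(\Epar{2^N}{f}+\frac1{n^2}\sum_{\mu=1}^N 2^{2\mu}\norm{Q_\mu}\Bigr),
\]
with $Q_\mu=P_{2^\mu}-P_{2^{\mu-1}}$.  This is exactly the head/tail split of your dyadic telescoping for the converse direction --- the $2^{2\mu}/n^2$ factor is your Bernstein-type bound $\wpar{}{Q_j,\delta}\le C(2^j\delta)^2\norm{Q_j}$, and the lone $\Epar{2^N}{f}$ is your tail estimate using only the uniform boundedness of $\hat T_t$.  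So your sketch of the converse implication is not merely reasonable but evidently coincides in structure with the cited proof.  Your identification of the two non-trivial inputs (uniform $\Lp$-boundedness of $\hat T_t$ and the polynomial Bernstein inequality, both reduced to Fourier--Jacobi multiplier statements living in the stated $\alpha$-range) is also on target; these are precisely the ingredients that~\cite{potapov:mat-99} has to supply and that the present paper takes for granted.  For the direct (Jackson) half, the kernel-averaging construction you outline is again the standard device in this line of work, and there is no indication the cited proof does anything different.
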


The lemma is proved in~\cite{potapov:mat-99}.

\section{The converse theorem}

Now we formulate our result.

\begin{thm}\label{th:converse}
	Let given numbers~$p$, $\alpha$ and~$\lambda$
	be such that $\allp$, $0<\lambda<2$;
	\begin{alignat*}2
		\frac12      &<\alpha\le1
		  &\quad &\text{for $p=1$},\\
		1-\frac1{2p} &<\alpha<\frac32-\frac1{2p}
		  &\quad &\text{for $1<p<\infty$},\\
		1            &\le\alpha<\frac32
		  &\quad &\text{for $p=\infty$}.
	\end{alignat*}
	If $f\in\Lp$,
	then the following inequality holds
	\begin{displaymath}
		\wpar{}{f,\frac1n}
		\le\frac\Cn{n^2}\sum_{\nu=1}^n\nu\Epar\nu f,
	\end{displaymath}
	where the constant~$C$ does not depend on~$f$ and~$n$.
\end{thm}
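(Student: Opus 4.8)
The plan is to follow the classical Bernstein–Stechkin scheme adapted to the operator $T_y$. First I would establish a Jackson-type (direct) inequality and a Bernstein-type inequality for the modulus $\hat\omega$ expressed through the multipliers $R_n(y)$ from Lemma~\ref{lm:properties-T}. Concretely, for a polynomial $P_m$ of degree $m$ one has, by property~(5), $a_k(T_y P_m - P_m) = (R_k(y)-1)a_k(P_m)$ for $k\le m$, so that estimating $\hat\omega(P_m,\delta)_{p,\alpha}$ reduces to estimating the quantity $\sup_{|1-y|\,\text{small}}\,|R_k(y)-1|$ for $k\le m$. The behaviour $R_k(y)-1 = O(k^2(1-y))$ as $y\to1$ (which should come from known asymptotics of Jacobi polynomials near the endpoint, already used implicitly in~\cite{potapov:mat-99}) gives a Bernstein-type bound $\hat\omega(P_m,\tfrac1n)_{p,\alpha}\le C\,\dfrac{m^2}{n^2}\,\normpar{P_m}{p,\alpha}$ for $m\le n$; together with Lemma~\ref{lmm:th:coincidence}, whose ``if'' direction furnishes the companion Jackson estimate $\hat\omega(f,\tfrac1n)_{p,\alpha}\le C\,E_n(f)_{p,\alpha}$ once $\hat\omega$ itself is controlled, this is the analytic core.

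Next I would run the standard decomposition. Pick polynomials $Q_{2^j}$ of degree $2^j$ of near-best approximation, $\normpar{f-Q_{2^j}}{p,\alpha}\le 2E_{2^j}(f)_{p,\alpha}$, and write, for $2^s\le n<2^{s+1}$,
\begin{displaymath}
	f = Q_1 + \sum_{j=0}^{s-1}\bigl(Q_{2^{j+1}}-Q_{2^j}\bigr) + \bigl(f-Q_{2^s}\bigr).
\end{displaymath}
Applying $\hat\omega(\,\cdot\,,\tfrac1n)_{p,\alpha}$, using its sublinearity (monotonicity and the triangle inequality, which follow from linearity of $T_y$ in property~(1) of Lemma~\ref{lm:properties-T}), and estimating each dyadic block $U_j:=Q_{2^{j+1}}-Q_{2^j}$, a polynomial of degree $2^{j+1}$ with $\normpar{U_j}{p,\alpha}\le C E_{2^j}(f)_{p,\alpha}$, by the Bernstein-type inequality above yields
\begin{displaymath}
	\wpar{}{f,\tfrac1n}
	\le C\sum_{j=0}^{s-1}\frac{2^{2j}}{n^2}E_{2^j}(f)_{p,\alpha}
		+ C\,E_{2^s}(f)_{p,\alpha}.
\end{displaymath}
The final, purely arithmetical step converts the dyadic sum to the stated sum over all $\nu$: since $E_\nu(f)_{p,\alpha}$ is non-increasing in $\nu$, each term $2^{2j}E_{2^j}(f)_{p,\alpha}$ is comparable to $\sum_{\nu=2^{j-1}}^{2^j}\nu E_\nu(f)_{p,\alpha}$, and the last term is absorbed likewise, giving $\wpar{}{f,\tfrac1n}\le \dfrac{C}{n^2}\sum_{\nu=1}^{n}\nu E_\nu(f)_{p,\alpha}$.

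I expect the main obstacle to be the Bernstein-type inequality $\hat\omega(P_m,\tfrac1n)_{p,\alpha}\le C\,(m/n)^2\,\normpar{P_m}{p,\alpha}$, i.e.\ proving that the multiplier sequence $k\mapsto R_k(y)-1$ is a bounded (Marcinkiewicz-type) multiplier on $\Lp$ with norm $O(m^2(1-y))$ on polynomials of degree $\le m$, uniformly in the admissible range of $\alpha$. This requires quantitative control of $\Py n(y)+\tfrac32(1-y^2)\Px n(y)-1$ near $y=1$ and a transference of the Cesàro/de la Vallée-Poussin summability of Fourier–Jacobi expansions that underlies the boundedness of $T_y$ on $\Lp$ (the restriction on $\alpha$ in the hypotheses is exactly what makes these operators bounded). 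The remaining ingredients — sublinearity of $\hat\omega$, the dyadic splitting, and the summation-by-parts rearrangement — are routine once this estimate is in hand.
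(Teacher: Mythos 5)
Your scheme is in substance the same as the paper's: a dyadic Bernstein--Stechkin decomposition into blocks built from best approximants, a Bernstein-type estimate of order $(m/n)^2$ on each block, control of the tail by the boundedness of $T_y$ on $\Lp$, and the final rearrangement of the dyadic sum into $n^{-2}\sum_{\nu=1}^n\nu\Epar\nu f$ --- the paper's closing step is identical to yours, including absorbing the leftover term $\Epar{2^N}f$ via $\sum_{\nu=2^{\mu-1}}^{2^\mu-1}\nu\Epar\nu f\ge2^{2(\mu-1)}\Epar{2^\mu}f$. Where you differ is in how the analytic core is obtained: the paper does not prove the block estimate at all, but quotes directly from the proof of Lemma~\ref{lmm:th:coincidence} given in \cite{potapov:mat-99} the inequality $\wpar{}{f,\frac1n}\le C\big(\Epar{2^N}f+n^{-2}\sum_{\mu=1}^N2^{2\mu}\norm{Q_\mu}\big)$ with $n/2<2^N\le n+1$, after which only the arithmetic remains; you instead propose to rederive this from the multiplier identity $a_k\prn{\T y{}{P,x}-P(x)}=(R_k(y)-1)a_k(P)$ together with $1-R_k(y)=O\prn{k^2(1-y)}$ (correct: $R_k(1)=1$ and $R_k'(1)=\frac{(k+2)(k+3)}2-3$) and a uniform $\Lp$ multiplier bound. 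Your route is more self-contained, but it leaves the genuinely hard step --- the uniform $\Lp$ operator estimate, which is precisely what the restrictions on $\alpha$ guarantee and what is established in \cite{potapov:mat-99} --- as an acknowledged obstacle, whereas citing that proof, as the paper does, is the economical route. One correction: Lemma~\ref{lmm:th:coincidence} does not ``furnish a Jackson estimate'' $\wpar{}{f,\frac1n}\le C\Epar nf$; it is only an equivalence between the power-scale conditions $\E\le Cn^{-\lambda}$ and $\w\le C\delta^\lambda$. Fortunately your argument never needs such an estimate: for the tail $f-Q_{2^s}$ the uniform boundedness of $T_y$ on $\Lp$ (hence $\wpar{}{g,\delta}\le C\norm g$) suffices.
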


\begin{proof}
	Let $P_n(x)$ be the polynomial
	of degree not greater than $n-1$
	such that
	\begin{displaymath}
		\norm{f-P_n}=\E \quad(n=1,2,\dotsc),
	\end{displaymath}
	and
	\begin{displaymath}
		Q_k(x)=P_{2^k}(x)-P_{2^{k-1}}(x) \quad(k=1,2,\dotsc),
	\end{displaymath}
	$Q_0(x)=P_1(x)$.
	
	For given~$n$
	we chose the positive integer~$N$
	such that
	\begin{displaymath}
		\frac n2<2^N\le n+1.
	\end{displaymath}
	By the proof of Lemma~\ref{lmm:th:coincidence}
	given in~\cite{potapov:mat-99}
	it follows that
	\begin{multline*}
		\wpar{}{f,\frac1n}
		\le\Cn
			\bigg(
			  \Epar{2^N}f
			  +\frac1{n^2}\sum_{\mu=1}^N 2^{2\mu}\norm{Q_k}
			\bigg)\\
		\le2\lastC
			\bigg(
				\Epar{2^N}f
				+\frac1{n^2}
					\sum_{\mu=1}^N2^{2\mu}
						\prn{\Epar{2^\mu}f+\Epar{2^{\mu-1}}f}
			  \bigg)\\
		\le4\lastC
			\bigg(
				\Epar{2^N}f
				+\frac1{n^2}
					\sum_{\mu=0}^{N-1}2^{2(\mu+1)}\Epar{2^\mu}f
			\bigg)\\
		\le\frac\Cn{n^2}
			\sum_{\mu=0}^N2^{2(\mu+1)}\Epar{2^\mu}f.
	\end{multline*}
	Considering that for $\mu\ge1$
	we have
	\begin{displaymath}
		\sum_{\nu=2^{\mu-1}}^{2^\mu-1}\nu\Epar\nu f
		\ge\Epar{2^\mu}f 2^{2(\mu-1)},
	\end{displaymath}
	it follows that
	\begin{multline*}
		\wpar{}{f,\frac1n}
		\le\frac\Cn{n^2}
			\bigg(
				2^2\Epar1f
				+\sum_{\mu=1}^N\sum_{\nu=2^{\mu-1}}^{2^\mu-1}
					\nu\Epar\nu f
			\bigg)\\
		\le\frac\Cn{n^2}\sum_{\nu=1}^n\nu\Epar\nu f.
	\end{multline*}
	
	Theorem~\ref{th:converse} is proved.
\end{proof}

\bibliographystyle{amsplain}
\bibliography{maths}

\end{document}